\numberwithin{equation}{section}
\newtheorem{theorem}{Theorem}[section]
\newtheorem{corollary}[theorem]{Corollary}
\newtheorem{lemma}[theorem]{Lemma}
\theoremstyle{definition}
\newtheorem{definition}[theorem]{Definition}
\newtheorem{conjecture}[theorem]{Conjecture}
\theoremstyle{remark}
\newtheorem{remark}[theorem]{Remark}
\numberwithin{equation}{section}
\DeclareMathOperator{\RE}{Re}
\begin{document}
	%\fontsize{14pt}{16pt}\selectfont
	
	\title[\tiny{A Conjecture on $H_3(1)$ For Certain Starlike Functions}]{A Conjecture on $H_3(1)$ For Certain Starlike Functions}
	
	\author{Neha Verma}
	\address{Department of Applied Mathematics, Delhi Technological University, Delhi--110042, India}
	\email{nehaverma1480@gmail.com}
	
	\author[S. Sivaprasad Kumar]{S. Sivaprasad Kumar}
	\address{Department of Applied Mathematics, Delhi Technological University, Delhi--110042, India}
	\email{spkumar@dce.ac.in}

	\subjclass[2010]{30C45, 30C50}
	
	\keywords{starlike function, subordination, Hankel}
\maketitle
\begin{abstract}
 We prove a conjecture concerning the third Hankel determinant, proposed in
``Anal. Math. Phys., https://doi.org/10.1007/s13324-021-00483-7",
which states that $|H_3(1)|\leq 1/9$ is sharp for the class $\mathcal{S}_{\wp}^{*}=\{zf'(z)/f(z) \prec \varphi(z):=1+ze^z\}$.
In addition, we also establish bounds for sixth and seventh coefficient, and $|H_4(1)|$ for functions in $\mathcal{S}_{\wp}^{*}$. The general bounds for two and three-fold symmetric functions related to the Ma-Minda classes $\mathcal{S}^*(\varphi)$ of starlike functions are also obtained.
\end{abstract}
\maketitle
	
\section{Introduction}
\label{intro}
%Let $\mathcal {H}(\mathbb {D})$ represent the class of functions which are analytic in the open unit disc $\mathbb {D}:= \{z:|z|<1\}$ on the complex-plane $\mathbb {C},$ and for any $r\in \mathbb {N} = \{1,2,...\}$ and $a\in \mathbb {C},$ assume,
%\begin{equation*}
%\mathcal {H}[a,n] = \bigg\{f\in \mathcal {H}(\mathbb {D}):f(z)=a+a_n %z^n+a_{n+1}z^{r+1}+a_{r+2}z^{r+2}+\cdots\bigg\}.
%\end{equation*}
%Let $\mathcal {H}[1,1] = \mathcal {H}$ and $\mathcal {A}$ denote the class of normalized analytic functions $f\in \mathcal {H}[0,1]$ of the form
%\begin{equation}
%f(z) = z+a_2z^2+a_3z^3+\cdots \label{form}
%\end{equation}and $\mathcal {S}$ be the subclass of functions of  $\mathcal {A}$ which are univalent. In addition, consider a subclass $\mathcal{P}$ of $\mathcal{H}$ consisting of functions with positive real part having the series expansion of the form $p(z)=1+\sum_{n=1}^{\infty}p_n z^n$. Let us consider $\mathcal {S}^{*}$ and $\mathcal {C}$ be the sub-classes of $\mathcal {S}$ containing starlike and convex functions mapping the unit disk $\mathbb {D}$ onto the starlike and convex domains respectively.  \\

For the given positive integers $n$ and $q$, the Hankel determinant $H_q(n)$ related to the function $f(z)=z+\sum_{n=2}^{\infty}a_nz^n\in \mathcal{A},$ the class of normalized analytic functions, given by
\begin{equation*}
H_{q}(n) =\begin{vmatrix}
a_n&a_{n+1}& \ldots &a_{n+q-1}\\
a_{n+1}&a_{n+2}&\ldots &a_{n+q}\\
\vdots& \vdots &\ddots &\vdots\\
a_{n+q-1}&a_{n+q}&\ldots &a_{n+2q-2}
\end{vmatrix}
\end{equation*}
where $a_1 = 1,$ was defined by Noonan and Thomas \cite{2 Noonan}. For various choices of $q$ and $n$, the growth of $H_q(n)$ was explored for many subfamilies of univalent functions.
%Recent research has focused on $H_2(1),H _2(2),H_3(1)$, and $H_4(1)$. The  Fekete-Szeg\"{o} functional $(a_3-a_2^2)=H_2(1)$ is well-known and $(a_3-\mu a _2^2)$ is a generalisation of this functional where $\mu$ is real and $f\in \mathcal{S}$, collection of normalized univalent functions.
Janteng et al. \cite{2 Janteng} discovered the sharp estimates on the second Hankel determinant,
%$H_2(2)=a_2a_4-a_3^2$
for the classes of starlike and convex functions. Krishna et al.\cite{krishna bezilevic} calculated the best estimates of $H_2(2)$ for the class of Bazilevi$\breve{c}$ functions.

Moreover,
\begin{equation} \label{2 third Hexpression}
H_3(1):=2 a_2a_3a_4+a_3(a_5-a_3^2)-a_4^2-a_2^2a_5
\end{equation}
is the third order Hankel detrminant.
Zaprawa \cite{zaprawa} evaluated the non sharp bounds on the third Hankel determinant as $|H_3(1)|\leq 1$ and $|H_3(1)|\leq 49/540$ for the classes of starlike and convex functions, respectively. The presence of higher order coefficients in the expression of $H_3(1)$ makes it difficult to solve, and the sharpness of the solution is crucial. Overcoming the challenges, Kowaczk et al.\cite{2 kowal} established the sharp bound $|H_3(1)|\leq 4/135$ for the class of convex functions in 2018. Kwon et al.\cite{sharp starlike third hankel} provided the best known estimate for starlike functions given by $|H_3(1)|\leq 4/135$. Later on, in 2018, Lecko et al.\cite{lecko 1/2 bound} proved that $|H_3(1)|\leq 1/9$ is the sharp bound for starlike functions of order $1/2$. By choosing specific values of $\varphi(z)$ in the class $\mathcal{S}^{*}(\varphi)$ of Ma-Minda \cite{ma-minda} defined by
\begin{equation}
   \mathcal{S}^{*}(\varphi)=\bigg\{f\in \mathcal {A}:\dfrac{zf'(z)}{f(z)}\prec \varphi(z) \bigg\},\label{mindaclass}
\end{equation}
authors in \cite{shagun 3sharp} have obtained the sharp estimates for $|H_3(1)|\leq 1/36$ for the choice $\varphi(z)=\sqrt{1+z}$. Since, a proper, careful and infact precise re-arrangement of terms is highly required to obtain the best possible bound and it results in few research articles, which are available in this area for the sharp bound of $H_3(1)$, see \cite{shagun 3sharp,rath,kowal}.
%Pommerenke \cite{pomi} and Hayman \cite{wk} explored the Hankel determinant $H_{q}(n)$ for the class $\mathcal{S}$ for the coefficients of f given in (\ref{form}), defined by
The most important step in obtaining the sharp bound of $H_3(1)$ was to rewrite equation (\ref{2 third Hexpression}) in terms of Carathe\'{odory} coefficients, especially $p_1,p_2,p_3,p_4$ and $p_5$, where $p_i's$ are coefficients of the class $\mathcal{P}:=\{p(z)=1+\sum_{n=1}^{\infty}p_nz^n:\RE p(z)>0\}.$
Libera and Zlotkiewicz presented the formula for $p_2$, $p_3$ in \cite{rj, rj1}
%and $p_3$ in \cite{rj1}
and Kwon et al. gave the expression for $p_4$ in \cite{lemma1}, which is stated below in the form of a lemma:
\begin{lemma}
\label{2 pi}
Let $p(z)=1+\sum_{n=1}^{\infty}a_n z^n\in \mathcal {P}$. Then,
\begin{equation}
2 p_2=p_1^2+\gamma (4-p_1^2),\label{b2}
\end{equation}
\begin{equation}
4p_3=p_1^3+2p_1(4-p_1^2)\gamma -p_1(4-p_1^2) {\gamma}^2+2(4-p_1^2)(1-|\gamma|^2)\eta \label{b3}
\end{equation}
and \begin{equation}
8p_4=p_1^4+(4-p_1^2)\gamma (p_1^2({\gamma}^2-3\gamma+3)+4\gamma)-4(4-p_1^2)(1-|\gamma|^2)(p_1(\gamma-1)\eta+\bar{\gamma}{\eta}^2-(1-|\eta|^2)\rho), \label{b4}
\end{equation}
for some $\gamma$, $\eta$ and $\rho$ such that $|\gamma|\leq 1$, $|\eta|\leq 1$ and $|\rho|\leq 1.$
\end{lemma}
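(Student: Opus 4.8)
The natural route is to move from $p\in\mathcal P$ to the associated Schwarz function and then to run the Schur algorithm (equivalently, to iterate the Schwarz lemma). First I would invoke the usual normalisation $p_1\in[0,2]$ (hence $4-p_1^2\ge0$) and introduce
\[
\omega(z)=\frac{p(z)-1}{p(z)+1}=c_1z+c_2z^2+c_3z^3+c_4z^4+\cdots,
\]
which is analytic on the unit disc with $\omega(0)=0$ and $|\omega(z)|<1$, and satisfies $p=(1+\omega)/(1-\omega)$. Inserting this into $p=1+2\sum_{k\ge1}\omega^{k}$ and comparing the first four coefficients gives
\[
p_1=2c_1,\qquad p_2=2\bigl(c_2+c_1^2\bigr),\qquad p_3=2\bigl(c_3+2c_1c_2+c_1^3\bigr),
\]
\[
p_4=2\bigl(c_4+c_2^2+2c_1c_3+3c_1^2c_2+c_1^4\bigr).
\]
In particular $c_1=p_1/2$, and hence $1-|c_1|^2=(4-p_1^2)/4$.

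Next I would extract the structure of $c_2,c_3,c_4$. If $|c_1|=1$ then $\omega(z)=c_1z$, all higher $c_k$ vanish, and \eqref{b2}--\eqref{b4} hold for any admissible $\gamma,\eta,\rho$; so assume $|c_1|<1$. Then $\phi(z):=\omega(z)/z$ maps the disc into its closure with $\phi(0)=c_1$, and
\[
\phi_1(z):=\frac1z\cdot\frac{\phi(z)-c_1}{1-\overline{c_1}\,\phi(z)}
\]
is again such a map; writing $\gamma:=\phi_1(0)$ (so $|\gamma|\le1$) and comparing leading coefficients yields $c_2=(1-|c_1|^2)\gamma$. Applying the same Möbius step to $\phi_1$ introduces a parameter $\eta$ with $|\eta|\le1$, and once more a parameter $\rho$ with $|\rho|\le1$; unwinding these two steps expresses
\[
c_3=(1-|c_1|^2)\bigl((1-|\gamma|^2)\eta-\overline{c_1}\,\gamma^2\bigr)
\]
and $c_4$ as an analogous but longer polynomial in $c_1,\gamma,\eta,\rho$ carrying the factor $1-|c_1|^2$ and involving the conjugates $\overline{c_1},\overline{\gamma},\overline{\eta}$.

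Finally I would substitute $c_1=p_1/2$, $1-|c_1|^2=(4-p_1^2)/4$ and the above formulas for $c_2,c_3,c_4$ into the four identities of the first step and simplify. Then \eqref{b2} drops out immediately, \eqref{b3} after a short computation, and \eqref{b4} after a longer one. The main obstacle is precisely this last simplification for $p_4$: several conjugated parameters enter, and one must track the cancellations carefully to reach the compact shape displayed in \eqref{b4}. The case $|\gamma|=1$, where the next Schur step collapses, is disposed of as for $|c_1|=1$: there the relevant higher coefficients are forced, so the identities hold automatically.
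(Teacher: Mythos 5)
The paper offers no proof of this lemma at all: it is imported verbatim from Libera--Z\l otkiewicz (for \eqref{b2}, \eqref{b3}) and Kwon--Lecko--Sim (for \eqref{b4}), so there is no ``paper's proof'' to compare against. Your Schur-algorithm route through the Schwarz function $\omega=(p-1)/(p+1)$ is a correct and standard way to derive these parametric formulas, and the parts you make explicit check out: $p_1=2c_1$, $p_2=2(c_2+c_1^2)$, $p_3=2(c_3+2c_1c_2+c_1^3)$, $p_4=2(c_4+c_2^2+2c_1c_3+3c_1^2c_2+c_1^4)$ are right, as are $c_2=(1-|c_1|^2)\gamma$ and $c_3=(1-|c_1|^2)\bigl((1-|\gamma|^2)\eta-\overline{c_1}\gamma^2\bigr)$, and substituting $c_1=p_1/2\in[0,1]$ reproduces \eqref{b2} and \eqref{b3} exactly. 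Crucially, the same triple $(\gamma,\eta,\rho)$, produced by three successive Schur steps, serves all three identities simultaneously, which is what the lemma asserts.

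The one real shortcoming is that you stop short of the only nontrivial part, the $c_4$ computation. For the record, it does go through: writing $\phi_1=\gamma+\gamma_1z+\gamma_2z^2+\cdots$, one finds
\begin{equation*}
\gamma_2=\frac{1}{1-|c_1|^2}\Bigl(c_4+\frac{2\overline{c_1}c_2c_3}{1-|c_1|^2}+\frac{\overline{c_1}^2c_2^3}{(1-|c_1|^2)^2}\Bigr),
\qquad
\gamma_2=(1-|\gamma|^2)\bigl((1-|\eta|^2)\rho-\overline{\gamma}\eta^2\bigr),
\end{equation*}
the second identity being the $c_3$-formula applied to $\phi_1$ in place of $\phi$. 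Solving for $c_4$ gives
\begin{equation*}
c_4=(1-|c_1|^2)\Bigl\{(1-|\gamma|^2)\bigl[(1-|\eta|^2)\rho-\overline{\gamma}\eta^2-2\overline{c_1}\gamma\eta\bigr]+\overline{c_1}^2\gamma^3\Bigr\},
\end{equation*}
and inserting this together with $c_1=p_1/2$ into $8p_4=16(c_4+c_2^2+2c_1c_3+3c_1^2c_2+c_1^4)$ lands exactly on \eqref{b4} (e.g.\ the $\gamma^2$-terms collect to $(4-p_1^2)(4-3p_1^2)\gamma^2$, matching $(4-p_1^2)(4\gamma^2-3p_1^2\gamma^2)$). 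So there is no hidden obstruction, but as written your argument asserts rather than performs the decisive simplification. Your handling of the degenerate cases $|c_1|=1$ and $|\gamma|=1$ (where the algorithm halts and the higher coefficients are forced, so any admissible remaining parameters work) is correct.
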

In 2021, Kumar and Gangania \cite{kumar-ganganiaCardioid-2021}, introduced a new class $\mathcal{S}^{*}_{\wp}$ by choosing $\varphi(z)=1+ze^z$ in (\ref{mindaclass}) defined as
\begin{equation*}
   \mathcal{S}_{\wp}^{*}=\bigg\{f\in \mathcal {A}:\dfrac{zf'(z)}{f(z)}\prec 1+ze^z\bigg\}.
\end{equation*}
They used the similar strategy and obtained the bound as, $|H_3(1)|\leq 0.150627$ for $\mathcal{S}^{*}_{\wp}$ and also proposed a conjecture which is stated as follows:
\begin{conjecture}\cite[Page no. 33]{kumar-ganganiaCardioid-2021}\label{2 conjecture}
If $f\in \mathcal{S}_{\wp}^{*},$ then the sharp bound for the third Hankel determinant is given by
\begin{equation*}
    |H_3(1)|\leq \frac{1}{9}\approx 0.1111\ldots,
\end{equation*}
with the extremal function $f(z)=z \exp\bigg(\frac{1}{3}(e^{z^3}-1)\bigg)=z+\frac{1}{3}z^4+\frac{2}{9}z^7+\cdots.$
\end{conjecture}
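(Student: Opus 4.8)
The plan is to reduce $|H_3(1)|$ to an extremal problem over the Carath\'eodory coefficients and then to carry out the delicate estimation that the sharp constant $1/9$ (as opposed to the earlier non-sharp $0.1506\ldots$) requires. Since $f\in\mathcal{S}_{\wp}^{*}$, there is a Schwarz function $w$ with $zf'(z)/f(z)=\varphi(w(z))$, where $\varphi(z)=1+ze^{z}=1+z+z^{2}+\tfrac12 z^{3}+\tfrac16 z^{4}+\cdots$. Writing $p(z)=(1+w(z))/(1-w(z))=1+p_{1}z+p_{2}z^{2}+\cdots\in\mathcal{P}$ and comparing coefficients, I would obtain $a_{2},a_{3},a_{4},a_{5}$ as explicit polynomials in $p_{1},\dots,p_{4}$. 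Because both $|H_{3}(1)|$ and the class $\mathcal{S}_{\wp}^{*}$ are invariant under the rotation $f(z)\mapsto e^{-i\theta}f(e^{i\theta}z)$, I may assume $p_{1}=p\in[0,2]$.

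Next I would insert these expressions into $H_{3}(1)=2a_{2}a_{3}a_{4}+a_{3}(a_{5}-a_{3}^{2})-a_{4}^{2}-a_{2}^{2}a_{5}$ from \eqref{2 third Hexpression}, and then use Lemma~\ref{2 pi} to replace $p_{2},p_{3},p_{4}$ by their representations from \eqref{b2}--\eqref{b4} in terms of $p,\gamma,\eta,\rho$ with $|\gamma|,|\eta|,|\rho|\le1$. After collecting terms, $H_{3}(1)$ becomes a polynomial in $p$ and in $\gamma,\bar\gamma,\eta,\bar\eta,\rho$. The coefficient of $\rho$ is a constant multiple of $(4-p^{2})(1-|\gamma|^{2})(1-|\eta|^{2})$, so replacing $|\rho|$ by $1$ is lossless at the extremum.

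The heart of the argument is then to regroup the remaining terms \emph{before} applying the triangle inequality, so that the resulting majorant $\Phi(p,x,y)$, with $x=|\gamma|\in[0,1]$ and $y=|\eta|\in[0,1]$, is still attained without slack at $(p,x,y)=(0,0,1)$; it is precisely this choice of grouping that forces the value $1/9$. With such a majorant in hand I would verify $\Phi(p,x,y)\le\tfrac19$ on $[0,2]\times[0,1]^{2}$: for fixed $(p,x)$ the majorant is at worst quadratic in $y$, so its maximum over $y$ occurs at an endpoint; one then disposes of the cases $p=2$ (where $H_{3}(1)$ is small) and $p=0$ (where the bound $\tfrac19$ is met), and for $0<p<2$ examines $\partial\Phi/\partial x$ to drive the maximum to the boundary and ultimately back to $p=0$. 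Equality forces $p=0$, $\gamma=0$, $|\eta|=1$, which corresponds to $w(z)=z^{3}$, i.e. $zf'(z)/f(z)=1+z^{3}e^{z^{3}}$; integrating gives $f(z)=z\exp\!\big(\tfrac13(e^{z^{3}}-1)\big)=z+\tfrac13 z^{4}+\tfrac29 z^{7}+\cdots$, for which $a_{2}=a_{3}=a_{5}=0$ and $a_{4}=\tfrac13$, so $H_{3}(1)=-a_{4}^{2}=-\tfrac19$, establishing sharpness.

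I expect the decisive difficulty to be exactly the ``precise re-arrangement'' flagged in the introduction: among the many algebraically equivalent ways of writing the degree-$\le4$ polynomial in $p,\gamma,\eta$, only a carefully chosen one keeps the triangle inequality tight at $(0,0,1)$, and the sheer size of the intermediate expressions---especially after the substitution of $p_{4}$ from \eqref{b4}---makes both the regrouping and the ensuing case analysis technically heavy rather than conceptually deep. The same machinery (coefficient formulas plus Lemma~\ref{2 pi}, or its $k$-fold symmetric analogue) should then also yield the stated bounds for $a_{6}$, $a_{7}$, $|H_{4}(1)|$, and for the two- and three-fold symmetric subclasses of $\mathcal{S}^{*}(\varphi)$.
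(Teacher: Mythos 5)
Your proposal follows essentially the same route as the paper: reduce to Carath\'eodory coefficients, normalize $p_1=p\in[0,2]$ by rotation, substitute the Libera--Zlotkiewicz--Kwon representations of $p_2,p_3,p_4$ from Lemma~\ref{2 pi}, bound $|\rho|$ by $1$, and maximize the resulting majorant over the cuboid $[0,2]\times[0,1]^2$, locating the extremum at $(p,x,y)=(0,0,1)$ with the same extremal function $z\exp\big(\tfrac13(e^{z^3}-1)\big)$. The strategy, the key lemma, and the sharpness argument all coincide with the paper's; what remains in your outline is only the explicit regrouping and the face-by-face, edge-by-edge case analysis that the paper carries out in detail.
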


In this article, together with the proof of conjecture, the estimates for $a_6$ and $a_7$ in association with the fourth order Hankel determinant for the functions in the class $\mathcal{S}_{\wp}^{*}$ are obtained. For $\mathcal{S}^{*}(\varphi)$, the general third Hankel determinant for second and third fold symmetric functions are also estimated.
%in the coming sections of this article.
%Finding the growth of the Hankel determinant for the entire class $\mathcal{S}\subset \mathcal{A}$ of univalent functions as well as its subclasses in terms of $q$ and $n$ is a significant field to explore. The growth problem can be reduced to an estimate of the Hankel determinant for selected subclasses of $\mathcal{A}$ for fixed $q$ and $n$.
%The following lemma serves as a basis for establishing the main result as it contains the well-known formula for $p_2$ and $p_3$ %\cite{pomi2},
%given by Libera and Zlotkiewicz's in \cite{rj} and formula for $p_4$ in \cite{lemma1}

%%%%%%%%%%%%%%%%%%%%%%%%%%%%%%%%%%%%%%%%%%%%%%%%%%%%%%%%%%%%%%%%%%%%%%%%%%%%%%%%%%%%%%%%%%%%%%%%%%%%%%%%
\section{Proof of the Conjecture \ref{2 conjecture}}
%for the class $\mathcal {S}^{*}_{\wp}$
The initial coefficients for the functions in class $\mathcal {S}^{*}_{\wp}$ given in \cite{kumar-ganganiaCardioid-2021}, are as follows:
\begin{equation}
a_2=\dfrac{p_1}{2},\quad a_3=\dfrac{1}{4}\bigg(p_2+\dfrac{p_1^2}{2}\bigg),\quad a_4=\dfrac{1}{6}\bigg(p_3+\dfrac{3}{4}p_1p_2\bigg)\label{2 three coefficients}
\end{equation}

\begin{equation}
a_5=\dfrac{1}{8}\bigg(\dfrac{p_1^4}{48}+\dfrac{p_2^2}{4}+\dfrac{2p_1p_3}{3}-\dfrac{p_1^2p_2}{8}+p_4\bigg)\label{2 fifth coefficient}
\end{equation}

\begin{equation}
    a_6=\frac{1}{4}\bigg(-\dfrac{p_1^5}{240}+\dfrac{19p_1^3 p_2}{480}-\dfrac{7 p_1 p_2^2}{80} -\dfrac{p_1^2 p_3}{15} + \dfrac{p_2 p_3}{6} +\dfrac{p_1 p_4}{4} +\dfrac{2p_5}{5}\bigg)\label{2 a6}
\end{equation}
and
\begin{equation}
    a_7=\dfrac{1}{4}\bigg(\dfrac{17p_1^6}{11520}-\dfrac{37 p_1^4 p_2}{1920} +\dfrac{29 p_1^2 p_2^2}{480}-\dfrac{p_2^3}{32} +\dfrac{13p_1^3 p_3 }{360}-\dfrac{p_1 p_2 p_3 }{6}+\dfrac{p_3^2}{18}-\dfrac{p_1^2 p_4}{16}+\dfrac{p_2 p_4}{8}\label{2 a7}
+\dfrac{p_1 p_5}{5}+\dfrac{p_6}{3}\bigg).
\end{equation}

 Now, we proceed by providing a positive response to the conjecture \ref{2 conjecture} in the form of a new theorem which states:
\begin{theorem}\label{proof of conjecture}
Let $f\in \mathcal {S}^{*}_{\wp}.$ Then,
\begin{equation}
|H_3(1)|\leq \dfrac{1}{9}.\label{2 9.5}
\end{equation}
The result is sharp.
%The sharpness of the result is governed by the function $f:\mathbb{D}\rightarrow \mathbb{C}$ defined as
%\begin{equation}
%f(z)=z\exp\bigg(\dfrac{1}{3}(e^{z^3}-1)\bigg)=z+\dfrac{z^4}{3}+\dfrac{2z^7}{9}+\cdots,\label{2 extremal}
%\end{equation}
%with $f(0)=0$ and $f'(0)=1$. For the values of $a_2=a_3=a_5=0$ and $a_4=1/3$ the function specified in equation (\ref{2 extremal}) serves as an extremal function for the bounds of $H_3(1)$.
\end{theorem}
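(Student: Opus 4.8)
The plan is to render $H_3(1)$ entirely in terms of Carathéodory coefficients and then optimise. First I would substitute (\ref{2 three coefficients}) and (\ref{2 fifth coefficient}) into the expression (\ref{2 third Hexpression}) for $H_3(1)$, obtaining a polynomial in $p_1,p_2,p_3,p_4$. Next, using Lemma \ref{2 pi}, I would eliminate $p_2,p_3,p_4$ in favour of $p_1$ together with the auxiliary parameters $\gamma,\eta,\rho$ subject to $|\gamma|,|\eta|,|\rho|\le 1$. Since $\mathcal{P}$ is invariant under $p(z)\mapsto p(e^{i\theta}z)$ and $|H_3(1)|$ is unaffected by the corresponding rotation of $f$, there is no loss of generality in taking $p_1=p\in[0,2]$.

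After this substitution, $H_3(1)$ is a polynomial in $p$ and in $\gamma,\eta,\rho$ and their conjugates. The parameter $\rho$ enters only through $p_4$, hence only through $a_5$, hence only linearly; so I would first apply the triangle inequality in $\rho$, replacing $|\rho|$ by $1$ and absorbing its argument optimally, and similarly set $x=|\gamma|\in[0,1]$, $y=|\eta|\in[0,1]$. A further careful application of the triangle inequality to the remaining terms then yields a majorant $|H_3(1)|\le\Phi(p,x,y)$, with $\Phi$ an explicit polynomial on the box $[0,2]\times[0,1]^2$. The grouping of terms before estimating must be done with some precision here: a careless triangle inequality overshoots $1/9$.

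The principal task is to show $\Phi(p,x,y)\le 1/9$ on the box, which I would treat by a case split on $p$. For $p$ bounded away from $0$ (say $p\ge p_0$ for a suitable constant), the terms carrying the factor $a_2=p/2$ can be bounded generously, and one checks $\Phi<1/9$ directly, typically reducing to a one-variable estimate or using monotonicity in $x$ and $y$. For small $p$ — the delicate regime containing the extremal configuration — I would localise: with $p$ and $x$ both small, the negative contribution $(1-x^2)^2y^2/9$ coming from $-a_4^2$ competes with the contributions of $2a_2a_3a_4$, $a_3(a_5-a_3^2)$ and $a_2^2a_5$, each of which carries at least one factor of $p$ or of $x$, and one shows the total never exceeds $1/9$ except in the limiting case $p=0$, $x=0$, $y=1$.

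The main obstacle is exactly this final optimisation: arranging the terms of $H_3(1)$ after the Libera–Zlotkiewicz substitution so that the successive triangle inequalities remain sharp, and then disposing of the small-$p$ corner where the maximiser lives. Once $\Phi\le 1/9$ is established, sharpness follows by exhibiting the function with $p_1=p_2=p_4=0$ and $p_3=2$, i.e.\ $p(z)=(1+z^3)/(1-z^3)$, for which $a_2=a_3=a_5=0$ and $a_4=1/3$, so that $H_3(1)=-a_4^2=-1/9$; this is the three-fold symmetric extremal function $f(z)=z\exp\bigl(\tfrac13(e^{z^3}-1)\bigr)$ recorded in Conjecture \ref{2 conjecture}.
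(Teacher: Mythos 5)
Your proposal follows essentially the same route as the paper: substitute the coefficient formulas \eqref{2 three coefficients}--\eqref{2 fifth coefficient} into \eqref{2 third Hexpression}, eliminate $p_2,p_3,p_4$ via Lemma \ref{2 pi}, use rotation invariance to take $p_1=p\in[0,2]$, apply the triangle inequality with $x=|\gamma|$, $y=|\eta|$, $|\rho|\le 1$ to get a polynomial majorant on $[0,2]\times[0,1]^2$, and then maximise it there (the paper organises this last step as a standard interior/faces/edges analysis of the cuboid rather than your case split on $p$, but the content is the same), with the maximum $1/9$ attained at $(p,x,y)=(0,0,1)$ and sharpness given by the same three-fold symmetric function $f(z)=z\exp\bigl(\tfrac13(e^{z^3}-1)\bigr)$.
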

\begin{proof}  As the Carathe\'{odory} class is rotationally invariant, $p_1$ lies inside the interval [0,2]. On substituting the expressions of $a_i$'s from equations (\ref{2 three coefficients}) and (\ref{2 fifth coefficient}) in equation \eqref{2 third Hexpression} assuming $p_1:=p$, we get
%$$H_3(1)=2p_2 p_3 p_4-p_3^3-p_4^2+p_3 p_5-p_2^2p_5,$$

$$H_3(1)=\dfrac{1}{9216}\bigg(3p^6-12p^4p_2+96p^3p_3-192pp_2p_3-144p^2p_2^2+144p^2p_4+72p_2^3 -256p_3^2+288p^2p_4\bigg).$$
On simplification using (\ref{b2})-(\ref{b4}), we obtain
	
$$H_3(1)=\dfrac{1}{9216}\bigg(\nu_1(p,\gamma)+\nu_2(p,\gamma)\eta+\nu_3(p,\gamma){\eta}^2+\phi(p,\gamma,\eta)\rho\bigg)$$
where $\gamma,\eta,\rho\in \mathbb {D},$
\begin{align*}	\nu_1(p,\gamma):&=-4p^6-25p^2{\gamma}^2(4-p^2)^2-5p^2{\gamma}^3(4-p^2)^2+2p^2{\gamma}^4(4-p^2)^2
+5p^4{\gamma}(4-p^2)\\
&\quad+36{\gamma}^3(4-p^2)^2-16p^4{\gamma}^2(4-p^2),\\
	\nu_2(p,\gamma):&=8(1-|\gamma|^2)(4-p^2)(4p^3-(4-p^2)(p\gamma+10p\gamma^2)),\\
	\nu_3(p,\gamma):&=8(1-|\gamma|^2)(4-p^2)^2(-8-|\gamma|^2)\\
	\phi(p,\gamma,\eta):&=72(1-|\gamma|^2)(4-p^2)^2(1-|\eta|^2)\gamma.
	\end{align*}
On taking $x=|\gamma|,y=|\eta|$ and using the fact that $|\rho|\leq 1,$ we have
\begin{align*}
|H_3(1)|\leq \dfrac{1}{9216}\bigg(|\nu_1(p,\gamma)|+|\nu_2(p,\gamma)|y+|\nu_3(p,\gamma)|y^2+|\phi(p,\gamma,\eta)|\bigg)\leq B(p,x,y),
\end{align*}
where
\begin{equation}
B(p,x,y)=\dfrac{1}{9216}\bigg(b_1(p,x)+b_2(p,x)y+b_3(p,x)y^2+b_4(p,x)(1-y^2)\bigg)\label{2 new}
\end{equation}
with
\begin{align*}
b_1(p,x):&=4p^6+25p^2x^2(4-p^2)^2+5p^2x^3(4-p^2)^2+2p^2x^4(4-p^2)^2+5p^4x(4-p^2)\\
	&\quad+36x^3(4-p^2)^2+16p^4x^2(4-p^2),\\
b_2(p,x):&=8(1-x^2)(4-p^2)(4p^3+(4-p^2)(px+10px^2)),\\	b_3(p,x):&=8(1-x^2)(4-p^2)^2(8+x^2),\\
b_4(p,x):&=72(1-x^2)(4-p^2)^2x.
\end{align*}
We must maximise $B(p,x,y)$ in the cuboid $V:[0,2]\times [0,1]\times [0,1]$. So, we use the maximum
%To illustrate this, the maximum
values in the interiors of the six faces, the twelve edges, and the interior of $V$.
\begin{enumerate}
\item To begin with, all the interior points of $V$ are taken into consideration. On partially differentiating equation (\ref{2 new}) with respect to $y$ to determine its points of maxima in the interior of $V$ by considering $(p,x,y)\in (0,2)\times (0,1)\times (0,1)$. We get
\begin{align*}
\dfrac{\partial B}{\partial y}&=\dfrac{1}{1152}(4 - p^2) (1 - x^2) \bigg(4p^3 + (4- p^2) (px(1+10 x)-18 x y+2y(8 + x^2))\bigg).
\end{align*}
Now $\dfrac{\partial B}{\partial y}=0$ gives
\begin{equation*}
y=y_0:=\dfrac{4px(1+10x)-p^3(10x^2+x-4)}{2(x-1)(x-8)(p^2-4)}.
\end{equation*}
We note that $y_0\in (0,1)$ assuring the existence of critical points and it
%For the existence of critical points, we notice that $y_0$ should belong to $(0,1)$,
 is possible when
\begin{equation}
	   4px(1+10x)-p^3(10x^2+x-4)+64-72x+2x^2(4-p^2)<2p^2(8-9x).\label{2 h1}
\end{equation}
%and
%\begin{equation}
%	  36x+p^2(8+x^2)<9p^2x+4(8+x^2).\label{2 h2}
%\end{equation}

Now we will apply the hit and trial method to find solutions that satisfy the inequalities (\ref{2 h1})
%and (\ref{2 h2})
for the existence of critical point. If we assume that $p$ tends to 0 and 2, there is no $x\in (0,1)$ that can satisfy the equation (\ref{2 h1}). Similarly, if $x$ tends to 0 and 1, there is no $p \in (0,2)$ satisfying equation (\ref{2 h1}). %Similarly, if $p$ tends to 0, the equation (\ref{2 h2}) holds for $x\in (0,1)$ but it does not hold when $p$ tends to 2. When $x$ tends to 1, the equation (\ref{2 h2}) does not have hold for $p \in (0,2)$.
As a result, the function $B$ does not have a critical point in $(0,2)\times(0,1)\times(0,1)$.
\item Now, the interiors of all the faces of $V$ are taken into consideration.\\
%we consider the interiors of all the six faces of the cuboid $V$.\\
On the face $p=0, B(p,x,y)$ becomes
\begin{equation}
	        d_1(x,y):=B(0,x,y)=\dfrac{9x^3+2(1-x^2)((8+x^2)y^2+9x(1-y^2))}{144}\label{2 9.4}
\end{equation}
with $x,y\in (0,1)$. We notice that $d_1$ has no critical points in $(0,1)\times(0,1)$ as
\begin{equation}
    \dfrac{\partial d_1}{\partial y}=\dfrac{-(1-x)^2(x+1)(x-8)y}{36}\neq 0,\quad x,y\in (0,1).
\end{equation}
On the face $p=2, B(p,x,y)$ becomes
\begin{equation}
    B(2,x,y):=\dfrac{1}{36},\quad x,y\in (0,1).\label{2 9.3}
\end{equation}
On the face $x=0, B(p,x,y)$ becomes
\begin{equation}
    d_2(p,y):=B(p,0,y)=\dfrac{(p^3+16y-4p^2y)^2}{2304}\label{2 9.1}
\end{equation}
with $y\in (0,1)$ and $p\in (0,2).$ To identify the points of maxima, we solve $\partial d_2/\partial p$ and $\partial d_2/\partial y$. On solving $\partial d_2/\partial y=0,$ we obtain
\begin{equation}
    y=-\dfrac{p^3}{4(4-p^2)}(=:y_1).\label{2 52}
\end{equation}
Based on simple calculations, we can conclude that such $y_1$ does not belong to $(0,1)$.
%For the specified range of $y$, we should have $y_1\in (0,1)$, which is only possible if $p>p_0= 2$.
%Also, $\frac{\partial d_2}{\partial p}=0$ implies
%\begin{equation}
%    3p^5+48p^2y-20p^4y-128py^2+32p^3y^2=0.\label{9}
%\end{equation}
%On substituting the value of equation (\ref{52}) in equation (\ref{9}), we get
%\begin{equation}
 %   60-31 p^2+4 p^4=0\label{90}
%\end{equation}
%According to a numerical calculation, the solution of (\ref{90}) in the $(0,2)$ interval is $p\approx 1.93649$.
As a result, there is no critical point in $(0,2)\times(0,1)$ for $d_2$.\\
On the face $x=1, B(p,x,y)$ becomes
\begin{equation}
    d_3(p,y):=B(p,1,y)=\dfrac{576+224p^2-136p^4+15p^6}{9216}, \quad p\in (0,2).\label{2 9.2}
\end{equation}
%$p:=p_0\approx0.991758$ comes out to be the critical point while computing $\partial h_3/\partial p=0.$ Undergoing simple calculations, $h_3$ achieves its maximum value $\approx 0.0736789$ at $p_0.$\\
During the calculations, $p_0:=p\approx0.991758$ comes out to be the critical point when $\partial d_3/\partial p=0$.  Moreover, using basic mathematics, $d_3$ reaches its maximum value of approximately $ 0.0736789$ at $p_0$.\\
On the face $y=0, B(p,x,y)$ becomes
\begin{align*}
    B(p,x,0)&=\dfrac{1}{9216}\bigg(4p^4x(23-34x-19x^2-4x^3)+p^6(4-5x+9x^2+5x^3+2x^4)\\
    &\quad\quad\quad\quad+576x(2-x^2)+16p^2x(-36+25x+23x^2+2x^3)\bigg)\\
    &\quad\quad\quad\quad=:d_4(p,x).
\end{align*}
On computing,
\begin{align*}
    \dfrac{\partial d_4}{\partial x}&=\dfrac{1}{9216}\bigg(-4p^4(-23+34x+19x^2+4x^3)+p^6(-5+18x+15x^2+8x^3)\\
    &\quad\quad\quad\quad-4p^4x(34+38x+12x^2)+16p^2(-36+25x+23x^2+2x^3)\\
    &\quad\quad\quad\quad-1152x^2-576(x^2-2)+16p^2x(25+46x+6x^2) \bigg)
\end{align*}
and \begin{align*}
    \dfrac{\partial d_4}{\partial p}&=\dfrac{p}{4608}\bigg(16x(-36+25x+23x^2+2x^3)-8p^2x(-23+34x+19x^2+4x^3)\\
    &\quad\quad\quad\quad+3p^4(4-5x+9x^2+5x^3+2x^4)\bigg).
\end{align*}
According to a numerical computation, the critical points obtained by solving the system of equations $\partial d_4/\partial x=0$ and $\partial d_4/\partial p=0$ in $(0,2)\times (0,1)$ implies
\begin{equation*}
    B(0.00115734,0.816497,0)\leq 0.0680414.
\end{equation*}
On the face $y=1, B(p,x,y)$ reduces to
\begin{align*}
    B(p,x,1)&=\dfrac{1}{9216}\bigg(4p^4(16+5x-48x^2-x^3-6x^4)-8p^5(4-x-14x^2+x^3+10x^4)\\
    &\quad\quad\quad\quad+p^6(4-5x+9x^2+5x^3+2x^4)+16p^2(-32+53x^2-13x^3+6x^4)\\
    &\quad\quad\quad\quad+128px(1+10x-x^2-10x^3)+64(16-14x^2+9x^3-2x^4)\\
    &\quad\quad\quad\quad+64p^3(2-x-12x^2+x^3+10x^4)\bigg)=:d_5(p,x).
\end{align*}
We observe that the system of equations $\partial d_5/\partial x=0$ and $\partial d_5/\partial p=0$ has no solution in $(0,2)\times (0,1).$\\
\item Now, we determine the maximum values attained on the edges of the cuboid $V$ by $B(p,x,y)$. We get $B(p,0,0)=c_1(p):=4p^6/9216$ from equation (\ref{2 9.1}). It is apparent that $c_1'(p)=0$ for $p=\beta_0:=0$ and $p=\beta_1:=2$ as points of minima and maxima in the interval $[0,2]$, respectively. Maximum value of $c_1(p)$ is $\approx 0.0277778$. Hence,
\begin{equation*}
    B(p,0,0)\leq 0.0277778.
\end{equation*}
With $y=1,$ we get $B(p,0,1)=c_2(p):=(16-4p^2+p^3)2/2304$ from the equation (\ref{2 9.1}).
Since $c_2'(p)<0$
%is a decreasing function
in the range $[0,2]$, $p=0$ acts as its point of maxima. Thus
\begin{equation*}
    B(p,0,1)\leq \dfrac{1}{9}, \quad p\in [0,2].
    \end{equation*}
Based on computations, $B(0,0,y)$ in equation (\ref{2 9.1}) reaches its maximum value at $y=1$. This leads to
\begin{equation*}
    B(0,0,y)\leq \dfrac{1}{9}, \quad y\in [0,1].
\end{equation*}
Since, the equation (\ref{2 9.2}) is free from $x$, we have $B(p,1,1)=B(p,1,0)=c_3(p):=(15p^6-136p^4+224p^2+576)/9216.$ Now, $c_3'(p)=448p-544p^3+90p^5=0$ when $p=\beta_2:=0$ and $p=\beta_3:=0.991758$ in the interval $[0,2]$ where $\beta_2$ and $\beta_3$ are the minimum and maximum points, respectively. Hence
\begin{equation*}
    B(p,1,1)=B(p,1,0)\leq 0.0736789,\quad p\in [0,2].
\end{equation*}
We get $B(0,1,y)=1/16$ when $p=0$ is substituted in equation (\ref{2 9.2}). The equation (\ref{2 9.3}) do not involve any variables namely $p$, $x$ and $y$. Thus, on the edges $p=2, x=1; p=2, x=0; p=2, y=0;$ and $p=2, y=1,$ the maximum value of $B(p,x,y)$ is determined by
\begin{equation*}
    B(2,1,y)=B(2,0,y)=B(2,x,0)=\dfrac{1}{36},\quad x,y\in [0,1].
\end{equation*}
From equation (\ref{2 9.1}), we obtain $B(0,0,y)=y^2/36.$ A simple calculation shows that
\begin{equation*}
    B(0,0,y)\leq \dfrac{1}{36},\quad y\in [0,1].
    \end{equation*}
We get $B(0,x,1)=c_4:=(8-7x^2-x^4)/72$ using equation (\ref{2 9.4}).
We observe that $c_4$ is a decreasing function in the range $[0,1]$, and so reaches its maximum value at $x=0,$ according to a simple calculation. Hence

\begin{equation*}
     B(0,x,1)\leq \dfrac{1}{9},\quad x\in [0,1].
\end{equation*}
$B(0,x,0)=c_5(x):=x(1-x^2)/8$ is obtained using equation (\ref{2 9.4}) once again. We get $c_5'(x)=0$ after further calculations for $x=x_0:=1/\sqrt{3}$. Also, $c_5(x)$ is an increasing function in $[0,x_0)$ and decreasing in $(x_0,1].$ As a result, maximum value is attained at $x_0.$ Thus

\begin{equation*}
     B(0,x,0)\leq 0.0481125,\quad x\in [0,1].
 \end{equation*}
\end{enumerate}
Thus, the inequality (\ref{2 9.5}) holds for all the cases. The sharpness of the result is governed by the function $f:\mathbb{D}\rightarrow \mathbb{C}$ defined as
\begin{equation}
f(z)=z\exp\bigg(\dfrac{1}{3}(e^{z^3}-1)\bigg)=z+\dfrac{z^4}{3}+\dfrac{2z^7}{9}+\cdots,\label{2 extremal}
\end{equation}
with $f(0)=0$ and $f'(0)=1$. For the values of $a_2=a_3=a_5=0$ and $a_4=1/3$ the function specified in equation (\ref{2 extremal}) serves as an extremal function for the bounds of $H_3(1)$.

\end{proof}
To prove our next result, we must first recall the following lemma:
\begin{lemma}\label{2 pomi lemma}
Let $p=1+\sum_{n=1}^{\infty}p_nz^n\in \mathcal{P}.$ Then
\begin{equation}
    |p_n|\leq 2, \quad n\geq 1,\label{2 caratheodory1}
\end{equation}
\begin{equation}
    |p_{n+k}-\mu p_n p_k|\leq \begin{cases}
    2, & 0\leq \mu\leq 1;\\
    2|2\mu-1|,& elsewhere,
    \end{cases}\label{2 caratheodory2}
\end{equation}

and \begin{equation}
    |p_1^3-\mu p_3|\leq
    \begin{cases}2|\mu-4|,& \mu\leq 4/3;\\ \\
    2\mu\sqrt{\dfrac{\mu}{\mu-1}},& 4/3<\mu.
    \end{cases}\label{2 caratheodory3}
\end{equation}
\end{lemma}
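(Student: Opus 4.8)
The three assertions of Lemma~\ref{2 pomi lemma} are classical facts about the Carath\'eodory class, so the plan is less to discover a proof than to record where each piece comes from; I would present them in order of increasing difficulty, relying on the Herglotz representation of $\mathcal P$ together with the parametrisations already collected in Lemma~\ref{2 pi}.

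For \eqref{2 caratheodory1} the cleanest route is the Herglotz formula: every $p\in\mathcal P$ admits a representation $p(z)=\int_{|\zeta|=1}\frac{1+\zeta z}{1-\zeta z}\,d\mu(\zeta)$ for some probability measure $\mu$ on the unit circle, whence $p_n=2\int_{|\zeta|=1}\zeta^{\,n}\,d\mu(\zeta)$ and therefore $|p_n|\le 2\int d\mu=2$. For \eqref{2 caratheodory2}, writing $c_j:=\int\zeta^{\,j}\,d\mu$ gives $p_{n+k}-\mu p_np_k=2\bigl(c_{n+k}-2\mu c_nc_k\bigr)$; the algebraic identity $c_{n+k}-2\mu c_nc_k=\bigl(c_{n+k}-c_nc_k\bigr)+(1-2\mu)c_nc_k$ combined with the Cauchy--Schwarz estimate $|c_{n+k}-c_nc_k|\le\sqrt{(1-|c_n|^2)(1-|c_k|^2)}$ reduces matters to the elementary inequality $\sqrt{(1-a^2)(1-b^2)}+ab\le 1$ for $a,b\in[0,1]$, which is an AM--GM computation. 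This yields the bound $2$ when $|2\mu-1|\le 1$; for $|2\mu-1|>1$ one rewrites $\sqrt{(1-a^2)(1-b^2)}\le 1-ab$ and multiplies through by $|2\mu-1|$ to obtain $2|2\mu-1|$. (For the low-index instances actually needed later, \eqref{b2} alone gives \eqref{2 caratheodory2} after maximising a linear function of $p_1^2\in[0,4]$.)

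For \eqref{2 caratheodory3} I would substitute the formula \eqref{b3} for $p_3$, turning $p_1^3-\mu p_3$ into an explicit expression in $p:=p_1\in[0,2]$ and $\gamma,\eta\in\overline{\mathbb D}$. Since that expression is affine in $\eta$, its modulus is maximised by taking $|\eta|=1$ with its argument chosen to align the terms, after which one maximises the resulting function of $(p,x)$ with $x=|\gamma|\in[0,1]$, again aligning $\arg\gamma$. The two regimes $\mu\le 4/3$ and $\mu>4/3$ in the statement then correspond precisely to whether this last maximum sits on the boundary (at $x\in\{0,1\}$ or $p=2$, producing $2|\mu-4|$) or at an interior stationary point (which produces $2\mu\sqrt{\mu/(\mu-1)}$), and one checks that the two branches agree at $\mu=4/3$.

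The only genuinely delicate point is the bookkeeping in \eqref{2 caratheodory3}: one must track signs carefully when passing to moduli and then carry out a boundary-versus-interior optimisation in two variables. Everything else is either a one-line Herglotz computation or the self-contained inequality $\sqrt{(1-a^2)(1-b^2)}+ab\le 1$; and since only a handful of small-index cases of \eqref{2 caratheodory1}--\eqref{2 caratheodory3} are invoked in what follows, in practice it is enough to cite the known statements and, where convenient, re-derive the needed instances directly from Lemma~\ref{2 pi}.
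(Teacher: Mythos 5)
The paper does not actually prove this lemma: it is stated as a recalled collection of classical facts (Carath\'eodory's coefficient bound, the generalized Livingston-type inequality for $p_{n+k}-\mu p_np_k$, and a known estimate for $p_1^3-\mu p_3$), so there is no in-paper argument to compare yours against. That said, your sketch is sound where it is carried out. The Herglotz computation for \eqref{2 caratheodory1} is complete. For \eqref{2 caratheodory2}, the key estimate $|c_{n+k}-c_nc_k|\le\sqrt{(1-|c_n|^2)(1-|c_k|^2)}$ does follow from Cauchy--Schwarz applied to $f=\zeta^n-c_n$ and $g=\zeta^{-k}-\overline{c_k}$ in $L^2(d\mu)$, and the two-case bound then drops out exactly as you describe (note the closing inequality $\sqrt{(1-a^2)(1-b^2)}+ab\le1$ is really Cauchy--Schwarz on $(a,\sqrt{1-a^2})$ and $(b,\sqrt{1-b^2})$, though your AM--GM route via $\sqrt{(1-a^2)(1-b^2)}\le 1-ab$ also works); your remark that \eqref{b2} alone handles the case $n=k=1$ by maximizing a linear function of $p_1^2$ is also correct, but the paper invokes \eqref{2 caratheodory2} for pairs such as $(n,k)=(2,4),(1,2),(2,3)$ in Lemma~\ref{2 a6a7bound}, so the general Herglotz argument is the one that is actually needed. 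The only incomplete piece is \eqref{2 caratheodory3}: substituting \eqref{b3} and optimizing over $|\eta|=1$, $x=|\gamma|$, $p\in[0,2]$ is the standard and viable route, and the two branches do meet at $\mu=4/3$ (both give $16/3$), but you have only described the optimization rather than performed it, so as written this part is a plan, not a proof. Since the paper itself treats the lemma as citable background, this level of detail is acceptable, but if you intend the lemma to be self-contained you should either complete the two-variable maximization for \eqref{2 caratheodory3} or supply an explicit reference for it.
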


\begin{lemma}\label{2 a6a7bound}
Let $f\in \mathcal{S}_{\wp}^{*}.$ Then $|a_6|\leq 47/60\approx 0.7833$ and $|a_7|\leq 503/480\approx 1.0479.$
\end{lemma}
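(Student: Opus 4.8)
\textbf{Proof proposal for Lemma \ref{2 a6a7bound}.}

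The plan is to feed the explicit formulas \eqref{2 a6} and \eqref{2 a7} for $a_6$ and $a_7$ into the Carath\'eodory estimates collected in Lemma \ref{2 pomi lemma}. The guiding principle is that, because the Carath\'eodory class is rotationally invariant, we may assume $p_1=p\in[0,2]$ is real and nonnegative; every other coefficient is then bounded via $|p_n|\le 2$, but a crude term‑by‑term application of $|p_n|\le 2$ will not be sharp, so the work is in deciding which products of $p_i$'s to group into the generalized triangle‑type inequalities \eqref{2 caratheodory2} and \eqref{2 caratheodory3} before bounding.

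For $a_6$, I would first collect the terms so that the higher‑order coefficients $p_4$ and $p_5$ appear inside expressions of the form $p_5-\mu p_1 p_4$ or $p_5-\mu p_2 p_3$ and $p_4-\nu p_1 p_3$, $p_4-\nu p_2^2$, chosen so that the coefficients $\mu,\nu$ land in the favourable range $[0,1]$ of \eqref{2 caratheodory2} wherever possible; the leftover purely low‑order part (involving only $p_1,p_2,p_3$) is then handled by \eqref{2 caratheodory1}, \eqref{2 caratheodory3} and the substitution $|p_2|\le 2$, $|p_3|\le 2$, reducing everything to a one‑variable polynomial inequality in $p\in[0,2]$ whose maximum is $47/60$, attained (consistently with the extremal function \eqref{2 extremal}) essentially at $p=0$ with $p_3=0$, $p_2=0$, $p_5=2$ so that $a_6=\tfrac14\cdot\tfrac25\cdot 2=\tfrac1{5}$\,—\,wait, that gives $1/5$, not $47/60$, so in fact the extremum comes from a genuinely nonzero configuration of $p,p_2,p_3$, and pinning down that configuration is exactly where care is needed. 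The same scheme applies to $a_7$: group $p_6$ with $p_1 p_5$ and $p_2 p_4$ and $p_3^2$ to extract a bounded factor via \eqref{2 caratheodory2}, group $p_5$ with $p_1 p_4$, $p_2 p_3$ similarly, and then reduce to a polynomial in $p$ that is maximized by $503/480$.

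The main obstacle will be the bookkeeping: the coefficient formulas \eqref{2 a6}, \eqref{2 a7} have many terms with mixed signs, and there is genuine freedom in how to distribute, say, the $p_1^2 p_3$ term between a $p_5-\mu p_1 p_4$ block, a $p_3(p_1^2-\lambda p_3)$ block, and a leftover $|p_1|^2|p_3|$ term; different choices give different (all valid but not all sharp) bounds, and only the right split yields $47/60$ and $503/480$. I would therefore proceed by first writing $a_6$ and $a_7$ as a sum of a ``structured part'' (products each carrying one of the Lemma \ref{2 pomi lemma} factors) plus a ``residual part'' in $p_1,p_2,p_3$ only, then optimize the residual as a polynomial in $p$ using elementary calculus, and finally check that the constants $47/60$ and $1.0479$ are indeed attained in the limit, certifying sharpness of the split. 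A secondary, minor difficulty is that \eqref{2 caratheodory3} is only quoted for $|p_1^3-\mu p_3|$ with $p_1$ the first coefficient, so its use must be restricted to the genuine $p_1,p_3$ pair and cannot be applied to, e.g., a $p_2^3-\mu p_6$ grouping; where such cubic terms arise I would fall back on $|p_2|\le 2$ together with \eqref{2 caratheodory2}.
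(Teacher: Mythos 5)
Your overall strategy is the same as the paper's: rewrite \eqref{2 a6} and \eqref{2 a7} as sums of blocks of the form $p_{n+k}-\mu p_n p_k$, then apply Lemma \ref{2 pomi lemma}, $|p_n|\le 2$ and the triangle inequality term by term. The paper's concrete grouping is
$a_6=\tfrac{1}{10}\bigl(p_5+\tfrac58 p_1p_4\bigr)-\tfrac{p_1^2}{60}\bigl(p_3-\tfrac{19}{32}p_1p_2\bigr)+\tfrac{p_2}{24}\bigl(p_3-\tfrac{21}{40}p_1p_2\bigr)-\tfrac{1}{960}p_1^5$, which yields $\tfrac{9}{20}+\tfrac{2}{15}+\tfrac16+\tfrac{1}{30}=\tfrac{47}{60}$, and an analogous five-block split of $46080\,a_7$. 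Three points where your plan drifts from what is actually needed. First, no normalization of $p_1$ and no one-variable polynomial optimization occurs: every factor of $p_1$ outside a block is bounded crudely by $2$, so the ``residual polynomial in $p$'' stage of your plan is superfluous. Second, your digression about pinning down the extremal configuration and ``certifying sharpness of the split'' rests on a misreading: $47/60$ and $503/480$ arise by summing the worst cases of each block \emph{independently}, so they are not attained by any single Carath\'eodory function, and the lemma makes no sharpness claim; the fact that setting $p_2=p_3=0$, $p_5=2$ gives only $1/5$ is therefore not a problem to be resolved. Third, your guiding principle of forcing $\mu\in[0,1]$ is not what the proof does for the dominant terms: the blocks $p_5+\tfrac58 p_1p_4$ and $p_6+\tfrac38 p_2p_4$ correspond to $\mu<0$, so the branch $2|2\mu-1|$ of \eqref{2 caratheodory2} is used there (contributing $\tfrac{9}{20}$ and $13440/46080$ respectively), and \eqref{2 caratheodory3} is in fact not invoked at all. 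Since you yourself acknowledge that the numerical bounds depend entirely on exhibiting the right split and you do not produce it, your text is a correct description of the method rather than a proof; supplying the explicit groupings above turns it into the paper's argument.
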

\begin{proof}
From equation (\ref{2 a6}), we have
\begin{align*}
 |a_6|&=\bigg|\frac{1}{4}\bigg(-\dfrac{p_1^5}{240}+\dfrac{19p_1^3 p_2}{480}-\dfrac{7 p_1 p_2^2}{80} -\dfrac{p_1^2 p_3}{15} + \dfrac{p_2 p_3}{6} +\dfrac{p_1 p_4}{4} +\dfrac{2p_5}{5}\bigg)\bigg|\\
 &=\bigg|\dfrac{1}{10}\bigg(p_5+\dfrac{5}{8}p_1p_4\bigg)-\dfrac{p_1^2}{60}\bigg(p_3-\dfrac{19}{32}p_1p_2\bigg)+\dfrac{p_2}{24}\bigg(p_3-\dfrac{21}{40}p_1p_2\bigg)-\dfrac{2}{1920}p_1^5\bigg|.
\end{align*}
By using equation(\ref{2 caratheodory1}), (\ref{2 caratheodory2}) and triangle inequality, we get
\begin{equation*}
    |a_6|\leq \dfrac{47}{60}.
\end{equation*}
Similarly, from equation (\ref{2 a7}), we have
\begin{align*}
   46080a_7&=17 p_1^6 - 222 p_1^4 p_2 + 696 p_1^2 p_2^2 - 360 p_2^3 + 416 p_1^3 p_3 -1920 p_1 p_2 p_3 + 640 p_3^2 - 720 p_1^2 p_4\\
   &\quad + 1440 p_2 p_4+ 2304 p_1 p_5 +3840 p_6
 \end{align*}
 or
 \begin{align*}
46080|a_7|&\leq3840\bigg|p_6+\dfrac{3p_2p_4}{8}\bigg|+416|p_1|^3\bigg|p_3-\dfrac{111p_1p_2}{208}\bigg|+360|p_2|^2\bigg|-p_2+\dfrac{29p_1^2}{15}\bigg|\\
&\quad+2304|p_1|\bigg|p_5-\dfrac{5p_2p_3}{6}\bigg|
 +|17p_1^6-720p_1^2p_4|+640|p_3|^2.
\end{align*}
On using equation (\ref{2 caratheodory1}) and (\ref{2 caratheodory2}), we get
\begin{equation}
    3840\bigg|p_6+\dfrac{3p_2p_4}{8}\bigg|\leq 13440,\quad 416|p_1|^3\bigg|p_3-\dfrac{111}{208}p_1p_2\bigg|\leq 6656,\label{2 7(1)}
\end{equation}
\begin{equation}
  360|p_2|^2\bigg|p_2-\dfrac{29p_1^2}{15}\bigg|\leq 7008,\quad 2304|p_1|\bigg|p_5-\dfrac{5p_2p_3}{6}\bigg|\leq 9216  \label{2 7(2)}
\end{equation}
and \begin{equation}
   |17p_1^6-720p1^2p_4|+640|p_3|^2 \leq 11968.\label{2 7(3)}
\end{equation}
From equations (\ref{2 7(1)}), (\ref{2 7(2)}) and (\ref{2 7(3)}), we get the desired result.
%\begin{equation*}
    %|a_7|\leq \dfrac{503}{480}.
%\end{equation*}
\end{proof}

%%%%%%%%%%%%%%%%%%%%%%%%%%%%%%%%%%%%%%%%%%%%%%%%%%%%%%%%%%%%%%%%%%%%%%%%%%%%%%%%%%%%%%%%%%%%%%%%%%%%%%%%%%%%%%%%%%%%%%%%%%%%%%%%%%%%%%%%%%%%%%%%%%%%%%%%%%%
\section{ Fourth Hankel Determinant and third hankel for n-fold symmetric functions}
\subsection{Fourth Hankel estimation for the class $\mathcal{S}^*_{\wp}$}
For $q=4$ and $n=1$, the expression of the fourth Hankel determinant can be written as
\begin{equation}
H_4(1)=a_7H_3(1)-a_6Q_1+a_5Q_2-a_4Q_3,
\end{equation}
where
\begin{equation}
    Q_1=a_3(a_2a_5-a_3a_4)-a_4(a_5-a_2a_4)+a_6(a_3-a_2^2),\label{2 Q1}
\end{equation}
\begin{equation}
    Q_2=a_3(a_3a_5-a_4^2)-a_5(a_5-a_2a_4)+a_6(a_4-a_2a_3),\label{2 Q2}
\end{equation}
and
\begin{equation}
   Q_3=a_4(a_3a_5-a_4^2)-a_5(a_2a_5-a_3a_4)+a_6(a_4-a_2a_3). \label{2 Q3}
\end{equation}
To compute the fourth Hankel determinant for the function in class $\mathcal{S}_{\wp}^{*},$ we substitute the values of $a_i$ from equations (\ref{2 three coefficients})-(\ref{2 a7})
%(\ref{2 fifth coefficient}), (\ref{2 a6}),
 in equation (\ref{2 Q1}).
%in terms of $p_i$ as given in Lemma \ref{2 pi}.
Upon simplification, we have
\begin{align*}
    92160Q_1&=27p_1^7-408p_1^5p_2+p_1^2p_2(660p_1^2-1224p_2)+p_1^2(336p_2p_3-1152p_5)\\
    &\quad +p_2(2304p_5-480p_2p_3)+p_4(1440p_1p_2-1920p_3)+392p_1^4p_3.
\end{align*}
By applying Lemma \ref{2 pomi lemma}, we arrive at
\begin{align*}
   92160 |Q_1|&\leq  |p_1|^5|27p_1^2-408p_2|+|p_1|^2|p_2||660p_1^2-1224p_2|+|p_1|^2|336p_2p_3-1152p_5|\\
    &\quad +|p_2||2304p_5-480p_2p_3|+|p_4||1440p_1p_2-1920p_3|+392|p_1|^4|p_3|\\
    &\leq 84352.
\end{align*}
So, we obtain
\begin{equation}
    |Q_1|\leq \dfrac{659}{720}\approx 0.915278.\label{2 Q1value}
\end{equation}
In similar way, we have
\begin{align*}
 737280Q_2&=73p_1^8+1440p_1^5p_3+p_1^2p_4(14400p_2-1920p_1^2)+p_1p_2p_3(4096p_1^2-8448p_2)\\
 &\quad +p_1^4p_2(-1692p_2-336p_1^2)
 +p_5(12288p_3-4608p_1^3)-11520p_4^2+720p_2^4\\
 &\quad -4608p_1^2p_3^2.
\end{align*}
Using Lemma \ref{2 pomi lemma}, we get
\begin{align*}
 737280|Q_2|&\leq |p_1|^5|73p_1^3+1440p_3|+|p_1|^2|p_4||14400p_2-1920p_1^2|+|p_1||p_2||p_3||4096p_1^2-8448p_2|\\
 &\quad+|p_1|^4|p_2||-1692p_2-336p_1^2|
 +|p_5||12288p_3-4608p_1^3|+11520|p_4|^2+720|p_2|^4\\
 &\quad+4608|p_1|^2|p_3|^2\\
 &\leq 759296+98304\sqrt{\frac{2}{5}}.
\end{align*}
So, we get
\begin{equation}
    |Q_2|\leq \dfrac{759296+98304\sqrt{\frac{2}{5}}}{737280}\approx 1.11419.\label{2 Q2value}
\end{equation}
Again, by rearrangement of terms, we have,
\begin{align*}
 4423680Q_3&=57p_1^9+288p_1^6p_3+p_1^3p_4(25920p_2-5760p_1^2)+p_1^4p_2(16128p_3-144p_1^3)\\
 &\quad-p_3^2(20480p_3+6144p_1^3)-p_1^3p_2^2(9216p_2+540p_1^2)+p_1p_5(36864p_3-6912p_1^3)\\
 &\quad-26496p_1^2p_2^2p_3+12528p_1p_2^4+46080p_2p_3p_4-34560p_1p_4^2-27648p_2^2p_5.
 \end{align*}
 From Lemma \ref{2 pomi lemma}, we get,
\begin{align*}
 4423680|Q_3|&\leq |p_1|^6|57p_1^3+288p_3|+|p_1|^3|p_4||25920p_2-5760p_1^2|+|p_1|^4|p_2||16128p_3-144p_1^3|\\
 &\quad+|p_3|^2|20480p_3+6144p_1^3|+|p_1|^3|p_2|^2|9216p_2+540p_1^2|+|p_1||p_5||36864p_3-6912p_1^3|\\
 &\quad+26496|p_1|^2|p_2|^2|p_3|+12528|p_1||p_2|^4+46080|p_2||p_3||p_4|+34560|p_1||p_4|^2\\
 &\quad+27648|p_2|^2|p_5|\\
 &\leq4029952+1376256\sqrt{\frac{21}{37}}+\frac{1179648}{\sqrt{13}}.
 \end{align*}
So, we get
\begin{equation}
    |Q_3|\leq \frac{4029952+1376256\sqrt{\frac{21}{37}}+\frac{1179648}{\sqrt{13}}}{4423680}\approx 1.21934.\label{2 Q3value}
\end{equation}
Based on the above computations, we make the following statement on fourth Hankel determinant:
\begin{theorem}
Let $f\in \mathcal{S}_{\wp}^{*}.$ Then $|H_4(1)|\leq 2.54589.$
\end{theorem}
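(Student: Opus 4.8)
The plan is to start from the expansion of the fourth Hankel determinant along its last row, $H_4(1)=a_7H_3(1)-a_6Q_1+a_5Q_2-a_4Q_3$, and to estimate each of the four products separately by the triangle inequality,
\[
|H_4(1)|\le |a_7|\,|H_3(1)|+|a_6|\,|Q_1|+|a_5|\,|Q_2|+|a_4|\,|Q_3|.
\]
For six of these seven factors I can quote what is already in hand: $|H_3(1)|\le 1/9$ by Theorem \ref{proof of conjecture}, $|a_6|\le 47/60$ and $|a_7|\le 503/480$ by Lemma \ref{2 a6a7bound}, and $|Q_1|\le 659/720$, $|Q_2|\le 1.11419$, $|Q_3|\le 1.21934$ from the rearrangements of $92160Q_1$, $737280Q_2$ and $4423680Q_3$ carried out above, in which each $Q_j$ is broken into blocks of the form $p_{m+k}-\mu p_mp_k$ and $p_1^3-\mu p_3$ and then bounded by Lemma \ref{2 pomi lemma}.

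What is left is to bound $|a_4|$ and $|a_5|$. For $a_4$ I would use $a_4=\tfrac16(p_3+\tfrac34p_1p_2)$ from \eqref{2 three coefficients}: applying \eqref{2 caratheodory2} with $\mu=-3/4$ (the ``elsewhere'' branch) yields $|a_4|\le 5/6$. For $a_5$ I would start from \eqref{2 fifth coefficient}, namely $8a_5=\tfrac{p_1^4}{48}+\tfrac{p_2^2}{4}+\tfrac{2p_1p_3}{3}-\tfrac{p_1^2p_2}{8}+p_4$, and regroup the right-hand side into admissible blocks; should the elementary regrouping not be tight enough, I would substitute the Libera--Zlotkiewicz/Kwon parametrization of Lemma \ref{2 pi} ($p_1=p\in[0,2]$, $|\gamma|,|\eta|,|\rho|\le 1$) and run a short face/edge maximisation exactly as in the proof of Theorem \ref{proof of conjecture}, obtaining $|a_5|\le 5/8$. (Both bounds are in fact sharp, attained by $z\exp(e^z-1)=z+z^2+z^3+\tfrac56z^4+\tfrac58z^5+\cdots\in\mathcal{S}^*_{\wp}$, but for the present purpose only their validity is needed.)

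Feeding everything into the displayed inequality,
\[
|H_4(1)|\le \frac{503}{480}\cdot\frac19+\frac{47}{60}\cdot\frac{659}{720}+\frac58\cdot 1.11419+\frac56\cdot 1.21934\le 2.54589,
\]
which is the assertion. For this final theorem the assembly itself is routine; the substantive work was done beforehand, in the precise rearrangements of $Q_1,Q_2,Q_3$ into sums of terms to which only $|p_n|\le 2$, \eqref{2 caratheodory2} and \eqref{2 caratheodory3} apply --- in particular in tracking the irrational contributions $98304\sqrt{2/5}$, $1376256\sqrt{21/37}$ and $1179648/\sqrt{13}$ generated by \eqref{2 caratheodory3}. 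That is where I expect the main difficulty to lie: each $|Q_j|$ enters the final constant linearly, so a loose splitting of any one of them would immediately spoil the bound, and I would re-verify those three estimates with particular care.
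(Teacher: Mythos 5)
Your proposal follows essentially the same route as the paper: expand $H_4(1)=a_7H_3(1)-a_6Q_1+a_5Q_2-a_4Q_3$, apply the triangle inequality, and insert $|H_3(1)|\le 1/9$, $|a_6|\le 47/60$, $|a_7|\le 503/480$, $|Q_1|\le 659/720$, $|Q_2|\le 1.11419$, $|Q_3|\le 1.21934$ together with $|a_4|\le 5/6$ and $|a_5|\le 5/8$; your final sum reproduces the paper's constant $2.54589$ exactly. The only piece you leave as a sketch, $|a_5|\le 5/8$, is likewise not proved in this paper (it is imported from the cited Kumar--Gangania reference), so no substantive gap remains.
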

%Now using the triangle inequality and substituting the bounds on initial coefficients $|a_i|(i=4,5,6,7)$ and $|Q_j|(j=1,2,3),$ we have
%\begin{align*}
%    |H_4(1)|&\leq |a_7||H_3(1)|+|a_6||Q_1|+|a_5||Q_2|+|a_4||Q_3|\\
%    &\leq 2.54589.
%\end{align*}

\subsection{Third Hankel determinant for $2\& 3$ fold symmetric functions for $\mathcal{S}^*(\varphi)$ }
In the recent times, it has been observed that finding the sharp estimates of third Hankel determinant for general Ma-Minda class is not feasible till now. But for some classes, sharp estimates have been obtained, for instance, see \cite{shagun 3sharp,rath,kowal} and now including Theorem \ref{proof of conjecture} as well which motivated us to settle the Conjecture \ref{2 conjecture}. Further looking at the difficulty of the general class, we restrict ourselves to answer the problem for the $n$-fold symmetric functions.

\begin{definition}\cite{goodman vol1}
A function $f\in \mathcal{A}$ is called $n$-fold symmetric if $f(e^{2\pi i/n}z)=e^{2\pi i/n}f(z)$ which holds for all $z\in \mathbb{D}$ and $n$ is a natural number. We denote the set of $n$-fold symmetric functions by $\mathcal{A}^{(n)}.$
\end{definition}

Let $f\in \mathcal{A}^{(n)},$ then $f$ has power series expansion
 $$f(z)=a_1z+a_{n+1}z^{n+1}+a_{2n+1}z^{2n+1}+\cdots.$$
Therefore, for $f\in \mathcal{A}^{(3)}$ and $f\in \mathcal{A}^{(2)}$ respectively, we have
\begin{equation}
    H_3(1)=-a_4^2 \quad \text{and} \quad H_3(1)=a_3(a_5-a_3^2). \label{2 nfold}
\end{equation}

Now we conclude this paper with the following result:
\begin{theorem}
\label{2 thm nfold}
Let $f\in \mathcal{S}^{*}{(\varphi)}.$ Then
\begin{enumerate}
\item $\widehat{f}\in \mathcal{S}^{*(3)}(\varphi)$ implies that $|H_3(1)|\leq |B_1|^2/9.$ \label{3-fold}
\item $\widehat{f}\in \mathcal{S}^{*(2)}(\varphi)$ implies that
\begin{equation*}
|H_3(1)|\leq \dfrac{1}{4}|B_1|\times\begin{cases}
\frac{1}{6}(B_2-\frac{9}{8}B_1^2+B_1^2),& \frac{9}{4}B_1^2\leq 2(B_2+B_1^2-B_1);\\\\
\frac{1}{6}B_1, & 2(B_2+B_1^2-B_1)\leq \frac{9}{4}B_1^2\leq 2(B_2+B_1^2+B_1);\\\\
\frac{1}{6}(-B_2+\frac{9}{8}B_1^2-B_1^2), & 2(B_2+B_1^2+B_1)\leq \frac{9}{4}B_1^2.
\end{cases}\label{2-fold}
\end{equation*}
\end{enumerate}
The estimate in \eqref{3-fold} is sharp.
\end{theorem}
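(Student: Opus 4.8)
The plan is to analyze the two cases separately using the $n$-fold symmetry reductions in \eqref{2 nfold} together with the standard power-series structure of $n$-fold symmetric Ma--Minda functions. First recall that if $\widehat f\in \mathcal S^{*(n)}(\varphi)$, then writing $z\widehat f'(z)/\widehat f(z)=\varphi(w(z))$ for a Schwarz function $w$ that is itself $n$-fold symmetric (so $w(z)=c_n z^n+c_{2n}z^{2n}+\cdots$ with $|c_n|\le 1$), one obtains $a_{n+1}$ in terms of $B_1$ and $c_n$, and $a_{2n+1}$ in terms of $B_1,B_2,c_n,c_{2n}$. Concretely, expanding $z\widehat f'/\widehat f = 1 + \sum B_k w^k$ and matching coefficients should give, for the relevant low-order terms, $n\,a_{n+1}=B_1 c_n$ and a formula of the shape $2n\,a_{2n+1}= B_1 c_{2n} + (\text{something})c_n^2$ after using $a_{n+1}$; the precise constants are a routine computation that I would carry out once at the start.

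For part \eqref{3-fold}: by \eqref{2 nfold}, $H_3(1)=-a_4^2$ for $f\in\mathcal A^{(3)}$, so $|H_3(1)|=|a_4|^2$. Since $3a_4=B_1c_3$ with $|c_3|\le 1$, we get $|a_4|\le |B_1|/3$, hence $|H_3(1)|\le |B_1|^2/9$. Sharpness is immediate by taking $w(z)=z^3$ (equivalently the function whose $zf'/f=\varphi(z^3)$), which forces $|c_3|=1$; this is exactly the extremal function pattern $z\exp(\int_0^z(\varphi(t^3)-1)/t\,dt)$, matching the $z\exp(\frac13(e^{z^3}-1))$ shape seen for $\mathcal S^*_\wp$. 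So \eqref{3-fold} and its sharpness fall out with essentially no obstacle.

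For part \eqref{2 nfold}-driven case $n=2$: here $H_3(1)=a_3(a_5-a_3^2)$, and with $2a_3=B_1c_2$, $4a_5=B_1c_4+\lambda c_2^2$ for the appropriate constant $\lambda$ coming from the expansion (I expect $\lambda$ to absorb the $B_2$ and $B_1$ contributions so that $a_5-a_3^2$ becomes a clean affine-in-$c_4$, quadratic-in-$c_2$ expression). Substituting, $|H_3(1)| = \tfrac14|B_1|\,|c_2|\,\bigl|a_5-a_3^2\bigr|$, and $a_5-a_3^2$ will have the form $\tfrac14\bigl(B_1 c_4 - \mu\, c_2^2\bigr)$ for a coefficient $\mu$ expressible through $B_1,B_2$. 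The key analytic input is then the classical bound for Schwarz coefficients: for a Schwarz function $w(z)=c_2z^2+c_4z^4+\cdots$ one has $|c_4 - \nu c_2^2|\le \max\{1,|\nu|\}$ when the natural parametrization is used, or more precisely $|c_4|\le 1-|c_2|^2$, which gives $|c_4 - \nu c_2^2|\le 1-|c_2|^2+|\nu||c_2|^2$. Writing $t=|c_2|\in[0,1]$, the quantity to maximize becomes $t\bigl(1-t^2 + (\text{coefficient})\,t^2\bigr)$ up to constants, a one-variable cubic on $[0,1]$; its maximum is attained either at an interior critical point (giving the middle branch $\tfrac16 B_1$, i.e. the "$t$ near the turning point" regime) or via the signed recombination of $B_1 c_4$ and $\mu c_2^2$ at $t=1$ (giving the outer branches with $\pm(B_2 - \tfrac98 B_1^2 + B_1^2)$). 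Splitting according to the sign and size of $\mu$ relative to $1$ produces exactly the three cases listed, with the thresholds $2(B_2+B_1^2\mp B_1)$ vs $\tfrac94 B_1^2$.

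The main obstacle I anticipate is purely bookkeeping rather than conceptual: pinning down the exact constant $\mu$ (hence verifying that the case boundaries come out as $2(B_2+B_1^2\pm B_1)\lessgtr \tfrac94 B_1^2$) requires correctly expanding $z\widehat f'(z)/\widehat f(z)=1+B_1w+B_2w^2+\cdots$ to order $z^4$ with $w$ even, solving for $a_3,a_5$, and then carefully tracking signs through the Schwarz-coefficient inequality $|c_4+(\text{phase})c_2^2|\le 1-|c_2|^2+\cdots$ so that the worst case is correctly identified in each sub-regime. I would also need to double-check that the $n=2$ extremal configuration ($t=1$, i.e. $w(z)=\pm z^2$) is compatible with the stated (sharp only for $n=3$) claim — indeed for $n=2$ the middle branch's extremum is interior, which is presumably why sharpness is asserted only in \eqref{3-fold}. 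Everything else — the reduction via \eqref{2 nfold}, the degree-one Schwarz bound $|c_n|\le1$, and the $|c_4|\le 1-|c_2|^2$ estimate — is standard and poses no difficulty.
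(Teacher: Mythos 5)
Your part (1) is correct and is essentially the paper's argument: the only coefficient available is $a_4$, one has $3a_4=B_1c_3$ with $|c_3|\le 1$ for the Schwarz coefficient (the paper phrases the same bound through the Carath\'eodory coefficient of the original $f$, writing $\beta_4=b_1/3=\tfrac16 B_1c_1$ with $|c_1|\le 2$), and $w(z)=z^3$ gives sharpness. No issue there.

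Part (2) contains a genuine gap, and the route you sketch would not land on the stated bound. The structural step you miss is that the coefficients of $\widehat f(z)=(f(z^2))^{1/2}$ are simple polynomials in the coefficients of the \emph{original} $f\in\mathcal S^*(\varphi)$: comparing $f(z^2)=\widehat f(z)^2$ gives $\alpha_3=\tfrac12 a_2$ and $\alpha_5=\tfrac12 a_3-\tfrac18 a_2^2$, hence $H_3(1)=\alpha_3(\alpha_5-\alpha_3^2)=\tfrac14 a_2\bigl(a_3-\tfrac34 a_2^2\bigr)$, i.e.\ a Fekete--Szeg\"{o} functional multiplied by $a_2$. The paper then \emph{decouples} the two factors, bounding $|a_2|\le|B_1|$ and $|a_3-\tfrac34 a_2^2|$ separately by the Ma--Minda Fekete--Szeg\"{o} theorem with $\mu=3/4$; the three branches of the statement are exactly the three branches of that theorem, and the fact that the two factors are extremized by incompatible configurations is precisely why sharpness is asserted only in part (1). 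Your plan instead runs a \emph{coupled} maximization in $t=|c_2|$ of a product of the shape $t\bigl(1-t^2+|\nu|t^2\bigr)$. This cannot reproduce the stated middle branch: the decoupled middle-branch extremal for the Fekete--Szeg\"{o} factor forces the first factor to vanish, so the coupled supremum there is attained elsewhere, and the interior critical point of $t\bigl(1-(1-|\nu|)t^2\bigr)$ yields a value depending on $\nu$, not the $\nu$-independent constant $\tfrac16 B_1$. So you would either prove a genuinely different (possibly smaller) bound than the one claimed, or you must decouple as the paper does. On top of this, the coefficient $\nu$ that determines the case boundaries is left uncomputed, so the assertion that the thresholds come out as $2(B_2+B_1^2\pm B_1)$ versus $\tfrac94B_1^2$ is unverified; in the paper these thresholds are simply inherited from the cited Fekete--Szeg\"{o} result and require no new computation.
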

\begin{proof}
Since $f(z)=z+a_2 z^2+a_3 z^3+\cdots\in\mathcal{S}^{*}{(\varphi)}.$ Let
 $$\varphi(z)=1+B_1z+B_2z^2+B_3z^3+\cdots$$
and
$$p(z)=\frac{zf'(z)}{f(z)}=1+b_1z+b_2z^2+\cdot.$$ This equation shows that
\begin{equation}
    (n-1)a_n=\sum_{k=1}^{n-1}b_ka_{n-k}\quad n>1.\label{shellybi}
\end{equation}
 %\begin{align*}p(z)=\frac{zf'(z)}{f(z)}&=1+a_2z+(-a_2^2+2a_3)z^2+(-a_2^3+2a_2(a_2^2-a_3)-a_2a_3+3a_4)z^3+\cdots\\
 %&=1+b_1z+b_2z^2+b_3z^3+\cdots.
 %\end{align*}
 Since $\varphi$ is univalent and $p\prec \varphi$, then the function $$p_1(z)=\frac{1+\varphi^{-1}(p(z))}{1-\varphi^{-1}(p(z))}=1+c_1z+c_2z^2+c_3z^3+\cdots,$$
belongs to the class $\mathcal{P}.$ Or equivalently,
 $$p(z)=\varphi\bigg(\frac{p_1(z)-1}{p_1(z)+1}\bigg).$$
Using the last equation, the coefficient $b_i$ can be expressed in terms of $c_i$ and $B_i$ ($i\in \mathbb{N})$. We have
\begin{equation*}
     b_1=\dfrac{1}{2}B_1c_1,\quad b_2=\dfrac{1}{4}((B_2-B_1)c_1^2+2B_1c_2)
\end{equation*}
and \begin{equation*}
     b_3=\dfrac{1}{8}((B_1-2B_2+B_3)c_1^3+4(B_2-B_1)c_1c_2+4B_1c_3).
 \end{equation*}
 Hence, by using the expressions for $b_k$ in equation (\ref{shellybi}), we obtain
 \begin{equation}
     a_2=b_1=\frac{1}{2}B_1c_1,\quad\text{and}\quad a_3=\frac{1}{8}((B_1^2-B_1+B_2)c_1^2+2B_1c_2).\label{2 BCi}
 \end{equation}
% and \begin{equation}
     %a_4=\frac{1}{48}((B_1^3-3B_1^2+3B_1B_2+2B_1-4B_2+2B_3)c_1^3+2(3B_1^2-4B_1+4B_2)c_1c_2+8B_1c_3).\label{2 b3}
 %\end{equation}
\begin{enumerate}
\item Since $f(z)=z+a_2 z^2+a_3 z^3+\cdots \in \mathcal{S}^{*}(\varphi)$ if and only if
    $$\widehat{f}(z)=(f(z^3))^{1/3}=z+\beta_4z^4+\cdots \in \mathcal{S}^{*(3)}(\varphi).$$ We have $\beta_4=b_1/3.$ Hence, for $f\in \mathcal{S}^{*(3)}(\varphi),$ from equation (\ref{2 nfold}), we obtain
    \begin{align}
        H_3(1)&=|\beta_4|^2=\dfrac{1}{9}|b_1|^2=\dfrac{1}{36}|B_1 c_1|^2\\
        &\leq \dfrac{1}{9}|B_1|^2.\label{2-h313}
    \end{align}
Also, the result is sharp for $f_0(z)=z\exp \int_{0}^{z}\frac{\varphi(t)-1}{t}dt$ and its rotations.
\item  Since $f(z)=z+a_2 z^2+a_3 z^3+\cdots \in \mathcal{S}^{*}(\varphi)$ if and only if
$$\widehat{f}(z)=(f(z^2))^{1/2}=z+\alpha_3z^3+\alpha_5 z^5\cdots \in \mathcal{S}^{*(2)}(\varphi).$$ Upon comparing the coefficients in the following:
\begin{equation*}
        z^2+a_2z^4+a_3z^6+\cdots=(z++\alpha_3z^3+\alpha_5 z^5\cdots)^2,
\end{equation*}
we obtain
\begin{equation}
        \alpha_3=\dfrac{1}{2}a_2 \quad \text{and} \quad \alpha_5=\dfrac{1}{2}a_3-\dfrac{1}{8}a_2^2.
\end{equation}
If $f\in \mathcal{S}^{*(2)}(\varphi)$, then from equation (\ref{2 nfold}), we have
\begin{equation}
        H_3(1)=\alpha_3(\alpha_5-\alpha_3^2)\label{2-h312}
\end{equation}
Using equation (\ref{2 BCi}),
\begin{align*}
        | H_3(1)|&=\bigg|\dfrac{1}{2}a_2\bigg(\dfrac{4a_3-3a_2^2}{8}\bigg)\bigg|
        =\bigg|\dfrac{a_2}{4}\bigg(a_3-\dfrac{3}{4}a_2^2\bigg)\bigg|\\
        &\leq \dfrac{1}{4}|a_2|\bigg|a_3-\dfrac{3}{4}a_2^2\bigg|.
    \end{align*}
Now, %using the bounds of $b_2$ for $p=1$ given in \cite{2 ali},
equation (\ref{2 BCi})%-(\ref{2 b3})
and Fekete-Szeg\"{o} bounds~\cite[Theorem 3]{ma-minda} for $\mu=3/4$, we get the desired result.
 %Here $A(B_i,c_i)=-\dfrac{3}{64} ((-B_1 + B_2) c_1^2 + 2 B_1 c_2)^2 +\dfrac{1}{8} ((B_1 - 2 B_2 + B_3) c_1^3 + 4 (-B_1 + B_2) c_1 c_2 + 4 B_1 c_3)$
\end{enumerate}
\end{proof}
\begin{corollary}
Let $f\in \mathcal{S}^{*}{(\varphi)}$ and $H_3(1)$ is given by equation (\ref{2-h313}). Then
\begin{enumerate}
    \item $\widehat{f}\in \mathcal{S}^{*(3)}(1+ze^z)$ implies that $|H_3(1)|\leq 1/9.$
    \item $\widehat{f}\in \mathcal{S}^{*(3)}((1+z)/(1-z))$ implies that $|H_3(1)|\leq 4/9.$
    \end{enumerate}
The sharpness of the bounds follows from \cite{kumar-ganganiaCardioid-2021} and \cite{zaprawa} respectively.
\end{corollary}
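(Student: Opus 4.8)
The plan is to obtain the Corollary as an immediate specialization of part~\eqref{3-fold} of Theorem~\ref{2 thm nfold}, which already furnishes the estimate $|H_3(1)|\le|B_1|^2/9$ for every $\widehat{f}\in\mathcal{S}^{*(3)}(\varphi)$ with $\varphi(z)=1+B_1z+B_2z^2+\cdots$. Hence the whole argument reduces to reading off the first Taylor coefficient $B_1$ of the two prescribed functions $\varphi$ and substituting into this bound.

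First I would expand $\varphi(z)=1+ze^z=1+z+z^2+\tfrac12z^3+\cdots$, so that $B_1=1$ and Theorem~\ref{2 thm nfold}\,\eqref{3-fold} gives $|H_3(1)|\le1/9$. Then, expanding $\varphi(z)=(1+z)/(1-z)=1+2z+2z^2+2z^3+\cdots$, I read off $B_1=2$, which yields $|H_3(1)|\le4/9$. This settles both inequalities.

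For the sharpness claims I would carry the Ma--Minda extremal function $f_0(z)=z\exp\int_0^z\frac{\varphi(t)-1}{t}\,dt$ (which satisfies $zf_0'(z)/f_0(z)=\varphi(z)$, so $f_0\in\mathcal{S}^{*}(\varphi)$) through the three-fold symmetrization $\widehat{f_0}(z)=\bigl(f_0(z^3)\bigr)^{1/3}$. In the first case $\frac{\varphi(t)-1}{t}=e^t$, so $f_0(z)=z\exp(e^z-1)$ and $\widehat{f_0}(z)=z\exp\bigl(\tfrac13(e^{z^3}-1)\bigr)$, which is precisely the extremal function of Conjecture~\ref{2 conjecture}; it has $a_4=1/3$, whence $|H_3(1)|=a_4^2=1/9$, in agreement with \cite{kumar-ganganiaCardioid-2021}. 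In the second case $\frac{\varphi(t)-1}{t}=\frac{2}{1-t}$, so $f_0$ is the Koebe function and $\widehat{f_0}(z)=z/(1-z^3)^{2/3}=z+\tfrac23z^4+\cdots$, giving $|H_3(1)|=a_4^2=4/9$; the extremality here is that asserted in \cite{zaprawa}.

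Since everything is a direct specialization of an already-proved theorem, no genuine obstacle arises; the only point requiring attention is the verification that the two symmetrized functions $\widehat{f_0}$ really coincide with the extremal functions in the cited papers, which amounts to matching the single relevant coefficient $a_4$ ($=1/3$ and $=2/3$, respectively) and recalling that for $f\in\mathcal{S}^{*(3)}$ one has $H_3(1)=-a_4^2$.
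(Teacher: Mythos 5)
Your proposal is correct and matches the paper's (implicit) argument exactly: the corollary is a direct specialization of Theorem~\ref{2 thm nfold}\eqref{3-fold}, obtained by reading off $B_1=1$ for $1+ze^z$ and $B_1=2$ for $(1+z)/(1-z)$. Your verification of sharpness via the symmetrized extremal functions $z\exp\bigl(\tfrac13(e^{z^3}-1)\bigr)$ and $z(1-z^3)^{-2/3}$ (with $a_4=1/3$ and $a_4=2/3$, respectively) is exactly what the paper delegates to the cited references.
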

%\begin{corollary} In (\ref{3-fold}) of Theorem \ref{2 thm nfold}, for different choices of $\phi(z),$ we get the following sharp results:
%\begin{enumerate}
%\item When $\phi(z)=1+ze^z$, we obtain $|H_3(1)|\leq 1/9$.
%\item When $\phi(z)=(1+z)/(1-z)$, we obtain $|H_3(1)|\leq 4/9$.
%\end{enumerate}
%The sharpness of the results follows from \cite{kumar-ganganiaCardioid-2021} and \cite{zaprawa} respectively.
%\end{corollary}
\begin{corollary}
\label{particular-2fold}
Let $f\in \mathcal{S}^{*}{(\varphi)}$ and $H_3(1)$ is given by equation (\ref{2-h312}). Then
\begin{enumerate}
    \item $\widehat{f}\in \mathcal{S}^{*(2)}(1+ze^z)$ implies that $|H_3(1)|\leq 1/24.$
    \item $\widehat{f}\in \mathcal{S}^{*(2)}((1+z)/(1-z))$ implies that $|H_3(1)|\leq 1/6.$
% In (\ref{2-fold}) of Theorem \ref{2 thm nfold}, for different choices of %$\phi(z),$ we get the following results:
%\begin{enumerate}
%\item When $\phi(z)=1+ze^z$, we obtain $|H_3(1)|\leq 1/24$.
%\item When $\phi(z)=(1+z)/(1-z)$, we obtain $|H_3(1)|\leq 1/6$.
\end{enumerate}
\end{corollary}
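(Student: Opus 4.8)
The plan is to obtain Corollary~\ref{particular-2fold} directly from part~(2) of Theorem~\ref{2 thm nfold}, so the only real work is reading off the leading Taylor coefficients of $\varphi$ and deciding which branch of the three-case estimate is active. First I would expand each candidate function about the origin: for $\varphi(z)=1+ze^z=1+z+z^2+\tfrac12 z^3+\cdots$ one has $B_1=B_2=1$, and for $\varphi(z)=(1+z)/(1-z)=1+2z+2z^2+\cdots$ one has $B_1=B_2=2$. Since $\widehat f\in\mathcal{S}^{*(2)}(\varphi)$ here, $H_3(1)$ is the two-fold expression \eqref{2-h312}, which is exactly the setting of Theorem~\ref{2 thm nfold}(2); thus it remains only to evaluate the piecewise bound there at these two data sets.

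Next I would compare $\tfrac94 B_1^2$ with the two thresholds $2(B_2+B_1^2-B_1)$ and $2(B_2+B_1^2+B_1)$ that delimit the branches. For $B_1=B_2=1$ these three quantities are $\tfrac94$, $2$ and $6$; since $2\le\tfrac94\le 6$ the middle branch applies, giving $|H_3(1)|\le\tfrac14|B_1|\cdot\tfrac16 B_1=\tfrac{B_1^2}{24}=\tfrac1{24}$. For $B_1=B_2=2$ they are $9$, $8$ and $16$; since $8\le 9\le 16$ the middle branch again applies, giving $|H_3(1)|\le\tfrac14\cdot 2\cdot\tfrac16\cdot 2=\tfrac{B_1^2}{24}=\tfrac16$. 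This settles both parts.

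There is no genuine obstacle: the argument is pure bookkeeping on top of Theorem~\ref{2 thm nfold}(2), which in turn rests on \eqref{2 BCi} and the Ma--Minda Fekete--Szeg\"o bound for $\mu=3/4$. The one point that needs a moment's attention is to verify the defining inequality of the middle branch before invoking it, rather than defaulting to the first or third line; it is also worth noting that both examples fall squarely in this middle, $\mu$-independent range, which is why each bound is simply $|B_1|^2/24$.
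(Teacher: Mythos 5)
Your proposal is correct and is exactly the intended derivation: the paper states this corollary without a written proof, as an immediate consequence of Theorem \ref{2 thm nfold}(2), and your bookkeeping — $B_1=B_2=1$ for $1+ze^z$, $B_1=B_2=2$ for $(1+z)/(1-z)$, verification that $\tfrac94 B_1^2$ lands in the middle branch in both cases ($2\le\tfrac94\le 6$ and $8\le 9\le 16$), yielding $|B_1|^2/24$ — checks out.
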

\begin{remark}
We observe that the bounds obtained in Corollary \ref{particular-2fold} are close to the sharp values and are still open for their sharpness.
\end{remark}

	%%%% Acknowledgment %%%%%%%%
	%		\section*{Acknowledgement}
	%		The author would like to thank the referees for the helpful
	%		suggestions.
	%		
	%%%% Bibliography  %%%%%%%%%%
\subsection*{Acknowledgment}
Neha is thankful to the Department of Applied Mathematics, Delhi Technological University, New Delhi-110042 for providing Research Fellowship.

\end{document}